\documentclass{article}[12pt]
\usepackage{amssymb}
\usepackage{eurosym}
\usepackage{epsfig}
\usepackage{amsmath}
\usepackage{amsfonts}
\usepackage{pgf,tikz}
\usepackage{enumerate}
\usepackage{cite}
\usepackage{xcolor, colortbl}
\usepackage[top=2cm, bottom=2.5cm, left=2.5cm, right=2.5cm, headsep=1cm]{geometry}

\newcommand{\azul}[1]{{\color{blue}{#1}}}

\usepackage[colorlinks=true, linkcolor=black, citecolor=black, urlcolor=black]{hyperref}

\newtheorem{theorem}{\sc Theorem}[section]

\newtheorem{definition}{\sc Definition}[section]
\newtheorem{remark}{\sc Remark}[section]
\newtheorem{corollary}{\sc Corollary}[section]
\newtheorem{example}{\sc Example}[section]

\def\qed{\hbox to 0pt{}\hfill$\rlap{$\sqcap$}\sqcup$\medbreak}

\title{Multiple non-collision periodic solutions for second order singular differential systems with parameters}

\author{Laura Mª Fern\'andez--Pardo and Jorge Rodr\'iguez--L\'opez} 

\date{}
\begin{document}
 \maketitle

%

\medbreak

\noindent {\it Abstract.} In this paper, new existence and multiplicity results for positive periodic solutions of singular second-order problems under sublinear and superlinear growth assumptions at infinity are established. Two aspects of our results should be highlighted in comparison with the existing literature. On the one hand, they allow for the consideration of systems exhibiting mixed behavior: sublinear in one component and superlinear in another. On the other hand, in the superlinear case, the existence of up to four positive solutions is established, whereas only two were obtained in previous works. Our approach is based on a recently established vectorial version of the Krasnosel'ski\u{\i}--Guo fixed point theorem in product spaces.

\medbreak

\noindent     \textit{2020 MSC:} 34C25, 34B16, 47H10.

\medbreak

\noindent     \textit{Key words and phrases.} Positive solutions; Nonlinear singular systems; Periodic solutions; Superlinear nonlinearity; Sublinear nonlinearity.

\section{Introduction}       

In this paper we establish the existence and multiplicity of positive solutions for a system with parameters of the following form
\begin{equation}
	\label{eq1.1}
	\left\{
	\begin{aligned}
		& x_1'' + a_1(t)x_1 = (-1)^{n_1}\lambda_1 \bigl( h_1(t) f_1(x_1,x_2) + g_1(t,x_1,x_2) \bigr), \quad t \in [0,T], \\
		& x_2'' + a_2(t)x_2 = (-1)^{n_2}\lambda_2 \bigl( h_2(t) f_2(x_1,x_2) + g_2(t,x_1,x_2) \bigr), \quad t \in [0,T], \\
		& x_1(0) - x_1(T) = 0 = x_1'(0) - x_1'(T), \\
		& x_2(0) - x_2(T) = 0 = x_2'(0) - x_2'(T),
	\end{aligned}
	\right.
\end{equation}
where $T>0, \,\lambda_i>0$, $n_i\in\{0,1\}$, $a_i\in L^{p_i}(0,T)$ with $1\leq p_i\leq +\infty$, $i=1,2$, $h_1,h_2\in L^1(0,T)$, and $f_1,f_2: \mathbb{R}^+\times \mathbb{R}^+\setminus \{(0,0)\}\to\mathbb{R}^+$, $g_1,g_2:[0,T]\times\mathbb{R}^+\times\mathbb{R}^+\to\mathbb{R}^+$ are continuous functions ($\mathbb{R}^+:=[0,\infty)$). Note that the nonlinearities $f_i(x_1,x_2)$, $i=1,2$ are not necessarily defined at $(x_1,x_2)=(0,0)$. In fact we are mainly interested in the existence of a singularity at this point, i.e.,
\[\lim_{(x_1,x_2)\to (0,0)} f_i(x_1,x_2)=+\infty.\]
A canonical example is given by the case where the nonlinearities $f_i$, $i=1,2$ are of the form
\[f_i(x_1,x_2)=\frac{1}{\sqrt{\bigl(x_1^2+x_2^2\bigr)^\alpha}},\]
with $\alpha>0$. This type of singular terms has been studied, for instance, in \cite{CTZ, CZ, ft, Lin, WCS, W}.

By a \emph{positive} solution of \eqref{eq1.1}, we mean a $T$-periodic function $(x_1,x_2)$ satisfying the differential equations almost everywhere and such that $x_1(t),x_2(t)>0$ for all $t\in[0,T]$. This is, in particular, a \emph{non--collision} solution, since no component vanishes. The question of existence of collisionless periodic orbits for systems with singularities has attracted much attention of many mathematicians over many years, such as \cite{CTZ, CZ, ft,  Lin, Torres2, WCS, W}. Even in the scalar case, the existence of periodic solutions for singular problems has
been a classical line of research for many authors, see \cite{Pino, Pino2, Haberts, Rachun, Torres, jiang,jiang2} and the references therein. This is likely due to the many applications of such problems, including gravitational and electromagnetic forces, as illustrated in the monograph by Torres \cite{To}.

Although our formulation of problem \eqref{eq1.1} covers the cases of both \emph{attractive} and \emph{repulsive} singularities (or a component-wise combination of them), depending on the values of $n_i\in\{0,1\}$, we will focus on repulsive singularities throughout the manuscript, i.e., the case $n_1=n_2=0$. The remaining possibilities can be similarly considered, as explained in Remark~\ref{rem1} below.

Our main assumptions on the nonlinearities $g_1$ and $g_2$ concern their asymptotic behavior at infinity. More precisely, we impose either sublinear or superlinear growth on each of the functions $g_1$ and $g_2$, independently. This growth is expressed in terms of the limit at infinity of the quotient $g_i(t,x_1,x_2)/x_i$, which is assumed to be zero (sublinear case) or infinity (superlinear case). Roughly speaking, our main results are the following:
\begin{enumerate}
	\item Problem \eqref{eq1.1} has at least one positive solution for any $\lambda_1,\lambda_2>0$ if both $g_1$ and $g_2$ are sublinear at infinity.
	\item Problem \eqref{eq1.1} has at least two positive solutions for any $\lambda_1>0$ and $\lambda_2>0$ sufficiently small if $g_1$ is sublinear and $g_2$ is superlinear at infinity.
	\item Problem \eqref{eq1.1} has at least four positive solutions for $\lambda_1,\lambda_2>0$ sufficiently small provided that both $g_1$ and $g_2$ are superlinear at infinity.
\end{enumerate}

Our approach is grounded in topological fixed point theory. More precisely, we apply a recently established component-wise Krasnosel'ski\u{\i}--Guo type fixed point theorem in product spaces \cite{LFP_JRL}. The idea of extending the classical Krasnosel'ski\u{\i} fixed point theorem to product spaces in this direction was first introduced by Precup \cite{PrecupFPT}. Within its major advantages over the classical Krasnosel’ski\u{\i} fixed point theorem, it highlights the fact that this formulation allows each component of the operator to exhibit distinct compressive or expansive behavior—a hybrid feature that has received comparatively little attention in the literature—while simultaneously ensuring the existence of fixed points in which both components are nontrivial. Moreover, it also enables the derivation of new multiplicity results.

It is worth noting that the existence of positive periodic solutions for singular systems with repulsive singularities and parameters has already been investigated in \cite{W}, where the classical Krasnosel'ski\u{\i}--Guo fixed point theorem is employed. In fact, the aforementioned reference deals not only with repulsive singularities but also with attractive ones, although the type of singularity is assumed to be the same in all components of the system. Their main results guarantee the existence of at least one positive solution when the nonlinearities are sublinear, and at least two solutions when they are superlinear and the parameters are sufficiently small.
	
In contrast, our approach allows for greater flexibility. Each nonlinearity $g_i$ may exhibit either sublinear or superlinear behavior, and the results can be adapted to hybrid situations in which different components of the system involve singularities of a different nature, for instance, one repulsive and another attractive, as previously mentioned. Furthermore, stronger multiplicity results are obtained. More precisely, while \cite{W} guarantees the existence of two solutions in the superlinear case, our approach ensures the existence of up to four distinct periodic solutions when both nonlinearities are superlinear, thereby improving the latter result by two additional solutions. Finally, our results also establish a stronger positivity property: all components of each solution are strictly positive.

%
%

\section{Preliminaries}    

We first recall here our problem of interest, namely
\begin{equation}
	\label{eq2.2}
	\left\{
	\begin{aligned}
		& x_1'' + a_1(t)x_1 =\lambda_1 \bigl( h_1(t) f_1(x_1,x_2) + g_1(t,x_1,x_2) \bigr), \quad t \in [0,T], \\
		& x_2'' + a_2(t)x_2 =\lambda_2 \bigl( h_2(t) f_2(x_1,x_2) + g_2(t,x_1,x_2) \bigr), \quad t \in [0,T], \\
		& x_1(0) - x_1(T) = 0 = x_1'(0) - x_1'(T), \\
		& x_2(0) - x_2(T) = 0 = x_2'(0) - x_2'(T),
	\end{aligned}
	\right.
\end{equation}
where $T>0, \,\lambda_i>0$, $a_i\in L^{p_i}(0,T)$ with $1\leq p_i\leq +\infty$, $i=1,2$, $h_1,h_2:[0,T]\to \mathbb{R}$, $h_1,h_2\in L^1(0,T)$, $f_1,f_2: \mathbb{R}^+\times \mathbb{R}^+\setminus \{(0,0)\}\to\mathbb{R}$ and $g_1,g_2:[0,T]\times\mathbb{R}^+\times\mathbb{R}^+\to\mathbb{R}$ are nonnegative continuous functions.

Hereafter, we will assume that the following condition holds:

\begin{enumerate}
	\item[$(A_1)$]  For each $i\in\{1,2\}$, the Hill equation $x'' + a_i(t) x = 0$ admits only the trivial solution, and its Green's function $G_i(t,s)$ associated with the periodic boundary conditions $x(0) - x(T) = 0 = x'(0) - x'(T)$ is positive and continuous for all $(t,s)\in[0,T] \times [0,T]$.
\end{enumerate}

Note that when $a_i$ is non-constant, a classical $L^p$-criterion ensures that the associated Green's function has a constant sign; that is, it is either strictly positive or strictly negative on $[0,T]\times[0,T]$ (see \cite{Torres}). Several refinements and extensions of this result can be found in \cite{CC_London,CCL}.

In the constant case, when $a_i(t)\equiv k^2$ with $0<k<\pi/T$, the Green's function can be computed explicitly and is known to be positive (see \cite{Cabada,W}), and hence condition $(A_1)$ is satisfied.

Under assumption $(A_1)$, system \eqref{eq2.2} can be equivalently reformulated as the following system of Hammerstein-type integral equations:
\begin{equation}
		\label{eq2.3}
\begin{array}{ll}
	x_1(t) &= \lambda_1 \displaystyle\int_{0}^{T} G_1(t,s) \Bigl( h_1(s) f_1(x_1(s),x_2(s)) + g_1(s,x_1(s),x_2(s)) \Bigr) \, ds 
	=: \mathcal{T}_1(x_1,x_2)(t), \quad t\in[0,T], \\[10pt] 
	x_2(t) &= \lambda_2 \displaystyle\int_{0}^{T} G_2(t,s) \Bigl( h_2(s) f_2(x_1(s),x_2(s)) + g_2(s,x_1(s),x_2(s)) \Bigr) \, ds 
	=: \mathcal{T}_2(x_1,x_2)(t), \quad t\in[0,T],
\end{array}
\end{equation}
where $G_i$, $i=1,2$, denote the associated Green's functions. Our aim is to find solutions of \eqref{eq2.2} as fixed points of the operator $\mathcal{T} = (\mathcal{T}_1, \mathcal{T}_2) : X \times X \to X \times X$, where $X$ denotes the Banach space of continuous functions $\mathcal{C}[0,T]$ endowed with the sup-norm 
\[
\|u\|_\infty := \max_{t \in [0,T]} |u(t)|.
\]

To this end, we shall employ a \textit{vectorial version of Krasnosel'ski\u{\i}'s fixed point theorem} in the product of two cones, which imposes compression--expansion conditions on the operator in terms of norms in a component-wise manner \cite{LFP_JRL}. We state this result below; first, however, for the reader’s convenience, we briefly recall the notion of a cone. A nonempty, closed, and convex subset $K$ of a normed linear space $X$ is called a \textit{cone} if it satisfies $\lambda u \in K$ for all $u \in K$ and all $\lambda \ge 0$, and $K \cap (-K) = \{0\}$.

\begin{theorem}\label{main_th}
 Let $K_1$ and $K_2$ be two cones in the normed spaces $(X_1,\|\cdot\|_1)$ and $(X_2, \|\cdot\|_2)$, respectively. We denote $K:=K_1\times K_2$ the corresponding cone in $X=X_1\times X_2$, and for $r,R\in\mathbb{R}^2$ with $0<r_i<R_i$ ($i=1,2$) 
	 \[\overline{K}_{r,R}:=\left\{ x=(x_1,x_2)\in K: r_i\leq\left\|x_i\right\|_i\leq R_i \text{ for } i=1,2\right\}.\]
	Assume that $\mathcal{T}=(\mathcal{T}_1,\mathcal{T}_2):\overline{K}_{r,R}\rightarrow K$ is a compact map and for each $i\in\{1,2\}$ either of the following conditions holds in $\overline{K}_{r,R}$:
	\begin{enumerate}
		\item[(a)] $\left\|\mathcal{T}_i x\right\|_i\geq \left\|x_i\right\|_i$ if $\left\|x_i\right\|_i=r_i$ and  $\left\|\mathcal{T}_i x\right\|_i\leq \left\|x_i\right\|_i$ if $\left\|x_i\right\|_i=R_i$; or
		\item[(b)] $\left\|\mathcal{T}_i x\right\|_i\leq \left\|x_i\right\|_i$ if $\left\|x_i\right\|_i=r_i$ and  $\left\|\mathcal{T}_i x\right\|_i\geq \left\|x_i\right\|_i$ if $\left\|x_i\right\|_i=R_i$. 
	\end{enumerate}
	Then $\mathcal{T}$ has at least one fixed point $x=(x_1,x_2)\in K$ such that $r_i\leq \left\|x_i\right\|_i\leq R_i$ ($i=1,2$).
\end{theorem}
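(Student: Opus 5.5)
The plan is to reduce Theorem~\ref{main_th} to a fixed point index computation in the product cone $K=K_1\times K_2$. The idea is to use the additivity and homotopy invariance of the index, applied separately in each component, and to exploit the product structure of $\overline{K}_{r,R}$. We may assume that $\mathcal{T}$ has no fixed point on the relative boundary of $\overline{K}_{r,R}$, since otherwise we are done.

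First we extend $\mathcal{T}$ to a compact map $\widetilde{\mathcal{T}}:K\to K$ by Dugundji's extension theorem. Cones are closed convex sets and therefore retracts, so the index $i_K$ is available. For each $i$ we set $U_i=\{x_i\in K_i:\|x_i\|_i<R_i\}$ and $V_i=\{x_i\in K_i:\|x_i\|_i<r_i\}$. The open set $\Omega=(U_1\setminus\overline{V_1})\times(U_2\setminus\overline{V_2})$ has closure $\overline{K}_{r,R}$. It suffices to show $i_K(\mathcal{T},\Omega)\neq0$.

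The key step replaces $\mathcal{T}$ component-wise by a model map whose index can be computed.
\begin{itemize}
\item If component $i$ satisfies (a) (compression), we use the constant map $\mathcal{S}_i\equiv 0$ on the outer side and the map $\mathcal{S}_i\equiv e_i$ on the inner side, where $e_i\in K_i$ is fixed with $\|e_i\|_i$ large.
\item The cleaner route is a homotopy argument on the product. For $x_i$ with $\|x_i\|_i=R_i$ in case (a), the homotopy $\mathcal{T}_i x- \theta$-free reasoning (as in the classical proof) gives no fixed point of $t\mathcal{T}_i x$, for $t\in[0,1]$, with $\|x_i\|_i=R_i$ unless $\|\mathcal{T}_ix\|\le R_i$ forces equality. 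This is the standard difficulty. The norm-type conditions (Krasnosel'ski\u{\i}--Guo rather than Leggett--Williams) require the usual trick: on the outer sphere one deforms $\mathcal{T}_i$ toward $0$. On the inner sphere one adds $\mu e_i$, where $e_i\in K_i\setminus\{0\}$, and uses that $x_i=\mathcal{T}_ix+\mu e_i$ with $\mu$ large is impossible by the boundedness of $\mathcal{T}$ (compactness).
\end{itemize}
Condition (a) on the sphere of radius $r_i$, where $\|\mathcal{T}_ix\|\ge \|x_i\|$, is the delicate one in the cone-norm setting. There we follow Guo's original argument: for a fixed point $x_i=\mathcal{T}_i x+\mu e_i$ one gets $x_i\ge \mathcal{T}_ix$ only if the cone is normal. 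To avoid normality we deform instead along $x_i=(1-t)\mathcal{T}_ix+t\,\rho\,\mathcal{T}_ix/\|\mathcal{T}_ix\|$, which is legitimate since $\mathcal{T}_ix\neq0$ there.

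Carrying this out, in each component the compressive or expansive behaviour yields, via a homotopy that acts only on component $i$ and leaves the other component as $\mathcal{T}_j$, an admissible homotopy on $\partial\Omega$. This works because $\partial\Omega$ is the union of the faces $\{\|x_i\|_i\in\{r_i,R_i\}\}$, and on each face only the condition of that component is needed. After both deformations the map becomes a decoupled map $(\mathcal{S}_1(x_1),\mathcal{S}_2(x_2))$. By the product formula its index is $i_{K_1}(\mathcal{S}_1,U_1\setminus\overline{V_1})\cdot i_{K_2}(\mathcal{S}_2,U_2\setminus\overline{V_2})$. Each factor equals $\pm1$ by the classical scalar Krasnosel'ski\u{\i} computation: the indices are $1$ on the ball and $0$ on the ball where expansion holds, so the annulus has index $1-0$ or $0-1$. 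Hence $i_K(\mathcal{T},\Omega)=\pm1\neq0$, which gives a fixed point in $\overline{K}_{r,R}$.

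The main obstacle is constructing the component-wise homotopies so that they remain fixed-point free on every face of $\partial\Omega$ simultaneously. In particular, the expansive face must be handled using only norm inequalities. I would try first the radial-projection homotopy described above, and I would fall back on the approach of \cite{PrecupFPT}, which uses the Schauder theorem on a suitably retracted set, if the index bookkeeping becomes unwieldy.
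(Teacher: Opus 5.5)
The paper does not prove this statement; it imports it from \cite{LFP_JRL}, so your argument has to stand on its own. Its architecture is a viable route: the index on $\Omega$, deformation to a decoupled map, the product formula, and the scalar computation giving $\pm1$. However, the two steps that carry the difficulty are missing, and one of your claims is false.

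\textbf{The component-wise homotopies are not admissible as claimed.} You assert that a homotopy acting only on component $i$ and keeping $\mathcal{T}_j$ is admissible because ``on each face only the condition of that component is needed''. The hypotheses are non-strict. Excluding fixed points of $\mathcal{T}$ on $\partial\Omega$ does not exclude points of a face $\{\|x_j\|_j=r_j\}$ or $\{\|x_j\|_j=R_j\}$ with $x_j=\mathcal{T}_jx$, and deforming component $i$ can then create a boundary fixed point. Here is an example. Take $X_i=\mathbb{R}$, $K_i=\mathbb{R}^+$, $r_i=1$, $R_i=3$, $\mathcal{T}_1\equiv\tfrac{5}{2}$ and $\mathcal{T}_2(x_1,x_2)=\tfrac{3-x_2}{2}\bigl(1+(x_1-2)^2\bigr)+x_2-1$.
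Both components satisfy (a), and $\mathcal{T}$ has no fixed point on $\partial\Omega$. Consider your model $6-2x_1$, which equals $e_1=4$ on $x_1=1$ and $0$ on $x_1=3$. The linear deformation of $\mathcal{T}_1$ to it is admissible on the faces $x_1\in\{1,3\}$. Yet $(2,1)\in\partial\Omega$ is a fixed point of $(6-2x_1,\mathcal{T}_2)$.

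The remedy is a preliminary homotopy acting on both components at once: $\bigl((1-s)+s\Lambda_i(\|x_i\|_i)\bigr)\mathcal{T}_ix$, with $\Lambda_i>1$ on the sphere where $\|\mathcal{T}_ix\|_i\geq\|x_i\|_i$ is assumed and $\Lambda_i<1$ on the other. A boundary fixed point with $s>0$ contradicts the norm inequality of the component whose face it lies on. For $s=0$ it would be a fixed point of $\mathcal{T}$. Afterwards all inequalities are strict, and only then do the faces genuinely decouple.

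\textbf{The decoupling homotopy is never constructed.} Adding $\mu e_i$ fails under norm hypotheses, as you note. The map $\rho\mathcal{T}_ix/\|\mathcal{T}_ix\|_i$ is defined only where $\mathcal{T}_ix\neq0$, which is guaranteed on one sphere but not inside the annulus. It still depends on $x_j$, and it would have to be glued to the deformation near the other sphere. None of your steps reaches a map of $x_i$ alone.

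What replaces normality is a compactness fact. Let $F_i\subset\overline{K}_{r,R}$ be the face where $\|\mathcal{T}_ix\|_i\geq\|x_i\|_i$ is assumed ($\|x_i\|_i=r_i$ in case (a), $\|x_i\|_i=R_i$ in case (b)). Let $C_i=\overline{\mathcal{T}_i(F_i)}$, which is compact with $0\notin C_i$, and fix $e_i\in K_i\setminus\{0\}$. Then $\delta_i:=\inf\{\|(1-t)u+te_i\|_i:\ u\in C_i,\ t\in[0,1]\}>0$. Indeed, a limit relation $(1-t)u+te_i=0$ would give $(1-t)u=-te_i\in K_i\cap(-K_i)=\{0\}$, forcing $u=0$ or $e_i=0$.

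Now choose $\Lambda_i$ as above, and moreover so that $\Lambda_i\delta_i$ exceeds the radius of $F_i$ on $F_i$, while $\Lambda_i\bigl(\sup\|\mathcal{T}_i\|_i+\|e_i\|_i\bigr)$ is below the other radius on the other sphere. Then the homotopy $\Lambda_i(\|x_i\|_i)\bigl[(1-t)\mathcal{T}_ix+te_i\bigr]$ keeps strict inequalities on both spheres and ends at $\Lambda_i(\|x_i\|_i)e_i$. The resulting decoupled map has index $(-1)^k$ on $\Omega$, where $k$ is the number of expansive components. This follows from the product formula and a one-dimensional computation on the rays $\mathbb{R}^+e_i$, and it is exactly the nonzero index you need. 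As written, your proposal locates this obstacle but does not overcome it.
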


Using standard terminology, a component $\mathcal{T}_i$ of the operator $\mathcal{T}$ is termed \textit{compressive} if condition $(a)$ holds, and \textit{expansive} if condition $(b)$ is satisfied.  

We will apply Theorem~\ref{main_th} to the restriction of operator $\mathcal{T}$ to the cone $K := K_1 \times K_2$ where
\begin{equation}
	\label{eq2.5}
K_i:=\left\{u\in X:\min_{t\in[0,T]}u(t)\geq c_i\left\|u\right\|_{\infty} \right\}, \qquad i=1,2,
\end{equation}
with
\[  m_i:=\min_{t,s\in[0,T]}G_i(t,s),\quad M_i:=\max_{t,s\in[0,T]}G_i(t,s) \quad \text{ and } \quad  c_i:=\frac{m_i}{M_i}.\]

Under assumption $(A_1)$, it is readily verified that the operator $\mathcal{T}$ leaves the cone $K := K_1 \times K_2$ invariant; see similar reasoning in \cite{Torres, W}. Moreover, by the continuity of the Green's functions and the nonlinearities, $\mathcal{T}$ is \textit{completely continuous}, that is, it is continuous and maps bounded sets into relatively compact sets. This property is a standard consequence of the Ascoli--Arzel\`a theorem.

\begin{remark}
	\label{rem1}
	Observe that, if for some component $i \in \{1,2\}$ one wishes to consider an attractive singularity (i.e., $n_i = 1$ in \eqref{eq1.1}), then condition $(A_1)$ must be replaced in that component by the following condition to ensure that the operator $\mathcal{T}$ maps the cone $K := K_1 \times K_2$ into itself:
	\begin{enumerate}
		\item[$(A_1)^*$] \emph{The Hill equation $x'' + a_i(t)x = 0$ admits only the trivial solution, and its Green's function $G_i(t,s)$ associated with the periodic boundary conditions $x(0) - x(T) = 0 = x'(0) - x'(T)$ is negative and continuous for all $(t,s)\in[0,T]\times[0,T]$.}
	\end{enumerate}
As a consequence, we anticipate that all results of this work, centered on the case of repulsive singularities in both components, can be easily adapted to situations where one or both components exhibit an attractive singularity at the origin, by simply replacing $(A_1)$ with $(A_1)^*$.
\end{remark}

\section{Main results}

In this section, we present our main results on the existence and multiplicity of solutions for the singular system \eqref{eq2.2}. More precisely, by combining asymptotically sublinear and superlinear conditions on the nonlinearities $g_i$ ($i=1,2$), we establish the existence of at least one, two, and up to four distinct solutions. These results are obtained under the assumption that the following two instrumental conditions hold:
\begin{enumerate}
	\item[$(H_1)$] $f_i:\mathbb{R}^+\times\mathbb{R}^+\setminus\{(0,0)\}\to\mathbb{R}^{+}$ is strictly decreasing in both arguments, $i=1,2$.
	\item[$(H_2)$] $h_i:[0,T]\to\mathbb{R}^+$ satisfies
	\[
	\left\|h_i\right\|_{L^1}:=\int_0^T h_i(t)\,dt>0,\quad i=1,2.
	\]
\end{enumerate}

We next introduce the notions of sublinearity and superlinearity at infinity upon which the main results are based.
\begin{definition}
	Let $i \in \{1,2\}$ and let $g_i:[0,T]\times\mathbb{R}^+\times\mathbb{R}^+\to \mathbb{R}^{+}$. 
We say that:
\begin{enumerate}
	\item[$(i)$] $g_i$ is \emph{sublinear at infinity} if 
	\begin{equation}
			(g_i)_\infty := \lim_{x_i\to\infty} \frac{g_i(t,x_1,x_2)}{x_i} = 0, \quad \text{uniformly with respect to } t \in [0,T] \text{ and } x_j \ge 0, \, (j \neq i).
	\end{equation}
	\item[$(ii)$] $g_i$ is \emph{superlinear at infinity} if 
		\begin{equation}
		(g_i)_\infty := \lim_{x_i\to\infty} \frac{g_i(t,x_1,x_2)}{x_i} = \infty, \quad \text{uniformly with respect to } t \in [0,T] \text{ and } x_j \ge 0, \, (j \neq i).
	\end{equation}
\end{enumerate}
\end{definition}

\subsection{Sublinear nonlinearities}

We state our first existence theorem as follows.

\begin{theorem}
	\label{th_sol}
	Let $(A_1)$, $(H_1)$, and $(H_2)$ hold.  Assume that $g_i$ is sublinear at infinity for each $i=1,2$. 
	
	Then, for all $\lambda_i>0$, $i=1,2$, system \eqref{eq2.2} has at least one positive periodic solution.
\end{theorem}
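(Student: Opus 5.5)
We apply Theorem~\ref{main_th} to $\mathcal{T}$ restricted to $\overline{K}_{r,R}$, with $K=K_1\times K_2$ as in \eqref{eq2.5}. Both components will be \emph{compressive}, i.e.\ condition (a) holds for $i=1,2$. The singular term $f_i$ gives expansion near zero, and the sublinearity of $g_i$ gives compression at large norms. A fixed point in $\overline{K}_{r,R}$ satisfies $\min_t x_i(t)\ge c_i\|x_i\|_\infty\ge c_ir_i>0$, so it is a positive periodic solution. On $\overline{K}_{r,R}$ we have $x_i(t)\ge c_ir_i>0$ for all $t$, so $(x_1(t),x_2(t))$ stays away from $(0,0)$. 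The operator is therefore well defined and completely continuous there, and it maps into $K$, as noted before Remark~\ref{rem1}.

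\textbf{Step 1 (small radii, expansion).} Take $x\in\overline{K}_{r,R}$ with $\|x_i\|_\infty=r_i$. Then $x_i(s)\le r_i$. For the other component $j\neq i$ we only know $x_j(s)\le \|x_j\|_\infty\le R_j$. By $(H_1)$, $f_i(x_1(s),x_2(s))\ge f_i(r_1,R_2)$ if $i=1$, and $\ge f_i(R_1,r_2)$ if $i=2$. Using $g_i\ge0$ and $G_i\ge m_i$,
\[
\mathcal{T}_i x(t)\ge \lambda_i m_i\|h_i\|_{L^1}\, f_1(r_1,R_2)\qquad (i=1),
\]
and analogously for $i=2$. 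We need this to be at least $r_i$. The difficulty is that $R_j$ appears, since the bounds are coupled. So we fix $R_1,R_2$ first (Step 2) and then choose $r_i$ small. As $r_1\to0^+$, $f_1(r_1,R_2)$ should be large. However, $(H_1)$ only gives monotonicity, not blow-up, when the other argument is fixed at $R_2>0$. This is the main obstacle. It suffices that $\lambda_1m_1\|h_1\|_{L^1}f_1(r_1,R_2)\ge r_1$ for some small $r_1$. Since $f_1(r_1,R_2)\ge f_1(1,R_2)>0$ for $r_1\le1$ (by $(H_1)$ and positivity of $f_1$), we can take
\[
r_1\le\min\{1,\ \lambda_1m_1\|h_1\|_{L^1}f_1(1,R_2)\}.
\]
This uses $(H_2)$ and the fact that $f_i$ is $\mathbb{R}^+$-valued and strictly decreasing, hence positive. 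So the expansion needs no singularity at all, only positivity. The same argument gives $r_2$, and we also take $r_i<R_i$.

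\textbf{Step 2 (large radii, compression).} Take $x\in\overline{K}_{r,R}$ with $\|x_i\|_\infty=R_i$. Then $x_i(s)\ge c_iR_i$ and $x_j(s)\ge c_jr_j$. The $r_j$ are not yet fixed, but $f_i$ is decreasing and we bound $f_i$ from above using only the $i$-th variable. This requires $f_i(x_1,x_2)\le f_i(c_1R_1,0)$ for $i=1$ (and $\le f_2(0,c_2R_2)$ for $i=2$), which is finite since $c_iR_i>0$. By sublinearity, for $\varepsilon>0$ there is $C_\varepsilon$ with $g_i(t,x_1,x_2)\le \varepsilon x_i+C_\varepsilon$ uniformly in $t$ and $x_j$. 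Hence
\[
\mathcal{T}_i x(t)\le \lambda_iM_i\bigl(\|h_i\|_{L^1}f_1(c_1R_1,0)+T(\varepsilon R_i+C_\varepsilon)\bigr)\qquad (i=1),
\]
and analogously for $i=2$. Choose $\varepsilon$ with $\lambda_iM_iT\varepsilon\le1/2$. Since $f_i(c_iR_i,0)$ is nonincreasing in $R_i$, the remaining term is bounded by a constant independent of $R_i$. So for $R_i$ large the right-hand side is at most $R_i$, and this choice does not depend on $r$. We then fix $R_1,R_2$ and choose $r_1,r_2$ as in Step 1.

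Theorem~\ref{main_th} with condition (a) in both components now yields a fixed point with $r_i\le\|x_i\|_\infty\le R_i$. By the cone property, $x_i\ge c_ir_i>0$, so this is a positive periodic solution of \eqref{eq2.2}.
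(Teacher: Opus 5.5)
Your proof is correct and follows essentially the paper's own argument: both components are compressive on $\overline{K}_{r,R}$, with $R_1,R_2$ fixed first via the bounds $f_1(c_1R_1,0)$, $f_2(0,c_2R_2)$ and sublinearity, and $r_1,r_2$ then chosen small using only positivity and monotonicity of $f_1(\cdot,R_2)$ and $f_2(R_1,\cdot)$, as in \eqref{eq3.8_revised}--\eqref{eq3.10_revised}. The one slip is that a global bound $g_i\le\varepsilon x_i+C_\varepsilon$, uniform in $x_j\ge0$, can fail for small $x_i$ (e.g.\ $g_1=x_2\max\{0,1-x_1\}$), but you only use it where $x_i\ge c_iR_i$, and there uniform sublinearity gives it with $C_\varepsilon=0$ once $R_i$ is large, which is exactly \eqref{eq3.9_revised}.
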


\noindent
{\bf Proof.} Consider the cone $K := K_1 \times K_2$, with $K_1$ and $K_2$ defined in \eqref{eq2.5}, and the operator $\mathcal{T} = (\mathcal{T}_1, \mathcal{T}_2)$ introduced in \eqref{eq2.3}.  
We aim to find $(\tilde{r}_1, \tilde{r}_2), (\tilde{R}_1,\tilde{R}_2) \in \mathbb{R}^2$ with $0 < \tilde{r}_i < \tilde{R}_i$ ($i=1,2$) such that Theorem \ref{main_th} applies to $\mathcal{T}:\overline{K}_{(\tilde{r}_1, \tilde{r}_2),(\tilde{R}_1,\tilde{R}_2)} \to K$. This operator is compact and maps $\overline{K}_{(\tilde{r}_1, \tilde{r}_2),(\tilde{R}_1,\tilde{R}_2)}$ into $K$.

Let $\lambda_i>0$, $i=1,2$. Since the functions $f_i$ are decreasing and $g_i$ are sublinear at infinity, there exist $R_i\equiv R_i(\lambda_i) > 0$ large enough satisfying
\begin{equation}
	\label{eq3.8_revised}
	2\lambda_1 M_1 \|h_1\|_{L^1} f_1(c_1 R_1, 0) \leq R_1, \quad 
	2\lambda_2 M_2 \|h_2\|_{L^1} f_2(0, c_2 R_2) \leq R_2,
\end{equation}
and
\begin{equation}
	\label{eq3.9_revised}
	g_i(t,x_1,x_2) \leq \frac{x_i}{2 \lambda_i M_i T}, \quad 
	\text{for } x_i \ge c_i R_i,\ x_j \ge 0,\ t \in [0,T],\ (j \neq i),\ i=1,2.
\end{equation}

Next, select $r_1\equiv r_1(\lambda_1,\lambda_2)\in (0,R_1)$ and $r_2\equiv r_2(\lambda_1,\lambda_2) \in (0,R_2)$ such that
\begin{equation}
	\label{eq3.10_revised}
	\lambda_1 m_1 f_1(r_1,R_2) \left\|h_1\right\|_{L^1} \geq r_1, \quad
	\lambda_2 m_2 f_2(R_1,r_2) \left\|h_2\right\|_{L^1} \geq r_2.
\end{equation}
This selection is possible because each function $f_i$ is nonnegative and strictly decreasing.

We now verify condition $(a)$ of Theorem \ref{main_th} for $i=1$ (the case $i=2$ is entirely analogous) for the restriction of $\mathcal{T}$ to the set $\overline{K}_{(r_1,r_2),(R_1,R_2)}$.

Let $ x = (x_1,x_2) \in\overline{K}_{(r_1,r_2),(R_1,R_2)}$ with $ \|x_1\|_\infty = R_1$.  
Then $c_1 R_1 \le x_1(t) \le R_1$ and $0 \le x_2(t) \le R_2$, for all $t \in [0,T]$.
Using $(H_1)$, \eqref{eq3.8_revised}, and \eqref{eq3.9_revised}, we obtain
\begin{align*}
	\mathcal{T}_1 x(t) 
	&= \lambda_1 \int_0^T G_1(t,s)\bigl( h_1(s)\, f_1(x_1(s),x_2(s)) + g_1(s,x_1(s),x_2(s)) \bigr)\, ds \\
	&\leq \lambda_1 \int_0^T G_1(t,s)\left( h_1(s)\, f_1(x_1(s),x_2(s)) + \frac{x_1(s)}{2\lambda_1 M_1 T} \right)\, ds \\
	&\le \lambda_1 M_1 \|h_1\|_{L^1} f_1(c_1 R_1,0) + \frac{R_1}{2} \leq R_1 = \|x_1\|_\infty,
\end{align*}
for all $ t \in [0,T] $, so that $\|\mathcal{T}_1 x\|_\infty \leq \|x_1\|_\infty$ if $\|x_1\|_\infty=R_1$.

Similarly, let $ x = (x_1,x_2) \in \overline{K}_{(r_1,r_2),(R_1,R_2)}$ with $ \|x_1\|_\infty = r_1 $.  
Then $c_1 r_1 \le x_1(t) \le r_1$ and $c_2 r_2 \le x_2(t) \le  R_2$ for all $ t \in [0,T]$. By $(H_1)$ and \eqref{eq3.10_revised} we have
\[
\mathcal{T}_1 x(t) 
\ge \lambda_1 \int_0^T G_1(t,s)\, h_1(s)\, f_1(x_1(s),x_2(s))\, ds
\ge \lambda_1 m_1 f_1(r_1, R_2) \int_0^T h_1(s)\, ds
\geq r_1 = \|x_1\|_\infty,
\]
for all $t \in [0,T]$. Hence $\|\mathcal{T}_1 x\|_\infty \geq \|x_1\|_\infty$ if $ \|x_1\|_\infty = r_1$.  This completes the verification of condition $(a)$ for $\mathcal{T}_1$.  

Theorem \ref{main_th} guarantees the existence of a fixed point 
$x^1=(x_1^1,x_2^1)$ of $\mathcal{T}$ satisfying 
$r_i \leq \|x_i^1\|_\infty \leq R_i$, $i=1,2$, with both components positive, 
thus providing the desired solution of system \eqref{eq2.5}. \qed

\subsection{Asymmetric growth: sublinear-superlinear nonlinearities}

If, instead of sublinearity in both components, superlinearity is assumed in one of them, Theorem~\ref{main_th} guarantees the existence of two distinct solutions for certain configurations of the parameters $(\lambda_1,\lambda_2)$. This yields our second main result, stated below.

\begin{theorem}
	\label{th_two_sol}
	Let $(A_1)$, $(H_1)$ and $(H_2)$ hold. Assume that $g_1$ is sublinear and $g_2$ is superlinear at infinity. 
	
	Then, for each $\lambda_1>0$, there exist $\overline{\lambda}_2(\lambda_1)>0$ such that system \eqref{eq2.2} has, at least, two distinct positive periodic solutions for all $0<\lambda_2<\overline{\lambda}_2(\lambda_1)$.
\end{theorem}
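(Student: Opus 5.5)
We will apply Theorem~\ref{main_th} twice on two disjoint annular regions of the cone $K=K_1\times K_2$, with the same operator $\mathcal{T}$ from \eqref{eq2.3}. The first component $\mathcal{T}_1$ is treated exactly as in the proof of Theorem~\ref{th_sol}: it is compressive (condition (a)) on a pair $r_1<R_1$. The second component $\mathcal{T}_2$ is compressive on a first annulus $[r_2,\rho_2]$ and expansive (condition (b)) on a second annulus $[\rho_2,R_2]$. This gives fixed points with $\|x_2\|_\infty\in[r_2,\rho_2]$ and $\|x_2\|_\infty\in[\rho_2,R_2]$. Distinctness then follows by making the compressive inequality at $\rho_2$ strict, so that no fixed point has $\|x_2\|_\infty=\rho_2$.

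First fix $\lambda_1>0$, and fix $\rho_2=1$, say. We choose $\overline{\lambda}_2(\lambda_1)$ so that $\mathcal{T}_2$ strictly compresses at level $\rho_2$. For $x\in K$ with $\|x_2\|_\infty=\rho_2$ we have $x_2(t)\ge c_2\rho_2$, hence $f_2(x_1,x_2)\le f_2(0,c_2\rho_2)$. The difficulty is that $g_2(t,x_1,x_2)$ must be bounded uniformly in $x_1\in[0,R_1]$, which is fine because $x_1$ ranges over a compact set once $R_1$ is fixed. So first choose $R_1$ as in \eqref{eq3.8_revised}--\eqref{eq3.9_revised}; these conditions depend only on $\lambda_1$, $f_1(c_1R_1,0)$, and the sublinear bound of $g_1$, which is uniform in $x_2$. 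Set
\[C:=M_2\Bigl(\|h_2\|_{L^1}f_2(0,c_2\rho_2)+T\max\{g_2(t,x_1,x_2): t\in[0,T],\,0\le x_1\le R_1,\,c_2\rho_2\le x_2\le \rho_2\}\Bigr),\]
and let $\overline{\lambda}_2=\rho_2/C$. Then $\lambda_2<\overline{\lambda}_2$ yields $\|\mathcal{T}_2x\|_\infty<\rho_2$.

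Next, given $\lambda_2\in(0,\overline\lambda_2)$, we handle the remaining radii. Choose $r_2<\rho_2$ and $r_1<R_1$ small, via \eqref{eq3.10_revised}, using the singular term: $\lambda_2 m_2\|h_2\|_{L^1}f_2(R_1,r_2)\ge r_2$ holds for small $r_2$ because $f_2(R_1,r_2)$ stays bounded below by the positive number $f_2(R_1,\rho_2)$. The same argument gives $r_1$ with $R_2$ replaced by whichever upper bound of $x_2$ applies; we take the larger of $\rho_2$ and $R_2$, and choose $r_1$ after $R_2$. For $R_2$, superlinearity gives $N$ with $g_2(t,x_1,x_2)\ge x_2/(\lambda_2 m_2 T c_2)$ for $x_2\ge N$, uniformly in $x_1$. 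Taking $R_2\ge \max\{N/c_2,\rho_2+1\}$, any $x$ with $\|x_2\|_\infty=R_2$ has $x_2(s)\ge c_2R_2\ge N$, hence $\mathcal{T}_2x(t)\ge \lambda_2 m_2 T\,c_2R_2/(\lambda_2 m_2Tc_2)=R_2$. This is expansion at $R_2$. Expansion at $\rho_2$ is replaced by compression there in the first annulus, as intended.

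Finally we verify the hypotheses of Theorem~\ref{main_th}. On $\overline K_{(r_1,r_2),(R_1,\rho_2)}$ both components satisfy (a). On $\overline K_{(r_1,\rho_2),(R_1,R_2)}$, $\mathcal{T}_1$ satisfies (a) and $\mathcal{T}_2$ satisfies (b). The bounds for $\mathcal{T}_1$ are the same as in Theorem~\ref{th_sol}, since they used $x_2\ge0$ for the upper bound and $x_2\le R_2$ for the lower bound. The main obstacle is the order of choices: $R_1$ must be chosen before $\overline\lambda_2$, and $r_1$ must be chosen after $R_2$, which itself depends on $\lambda_2$. Checking that no circularity arises is the point requiring care. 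The two fixed points have positive components because of the cone lower bounds, and they are distinct because one satisfies $\|x_2\|_\infty<\rho_2$ and the other $\|x_2\|_\infty>\rho_2$.
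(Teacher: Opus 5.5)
Your proposal is correct and follows essentially the same route as the paper: the paper fixes $r_2^*$ (your $\rho_2$) and chooses $R_1(\lambda_1)$ from sublinearity. It then defines $\overline{\lambda}_2(\lambda_1)$ as exactly your $\rho_2/C$ and applies Theorem~\ref{main_th} on $\overline{K}_{(r_1,r_2),(R_1,r_2^*)}$ (both components compressive) and $\overline{K}_{(r_1,r_2^*),(R_1,R_2)}$ (first compressive, second expansive), with distinctness coming from the strict inequality at $r_2^*$. Your explicit ordering of choices, with $R_2$ and then $r_1,r_2$ chosen only after $\lambda_2$ is fixed, is slightly cleaner than the paper's write-up.
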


\noindent
{\bf Proof.} We will find $\tilde{r}_1,\tilde{r}_2, \tilde{r}_2^*, \tilde{R}_1,\tilde{R}_2\in\mathbb{R}$ satisfying  $0<\tilde{r}_1<\tilde{R}_1$ and $0<\tilde{r}_2<\tilde{r}_2^*<\tilde{R}_2$, such that Theorem \ref{main_th} applies to the restriction of $\mathcal{T}$ to $\overline{K}_{(\tilde{r}_1,\tilde{r}_2),(\tilde{R}_1,r^*_2)}$ and $\overline{K}_{(\tilde{r}_1,\tilde{r}_2^*),(\tilde{R}_1,\tilde{R}_2)}$.

Fix $\lambda_1>0$ and $r_2^*>0$. Since $f_1$ is decreasing and $g_1$ is sublinear at infinity, there exists $R_1\equiv R_1(\lambda_1)>0$ such that
\begin{equation}
	\label{eq3.11}
	2\lambda_1 M_1 \|h_1\|_{L^1} f_1(c_1 R_1,0) \leq R_1,
\end{equation}
and
\begin{equation}
	\label{eq3.12}
	g_1(t,x_1,x_2) \leq \frac{1}{2 \lambda_1 M_1 T} x_1, 
	\quad \text{for all } x_1 \ge c_1 R_1,\, x_2 \ge 0, \text{ and } t \in [0,T]\azul{.}
\end{equation}

On the other hand, since $g_2$ is superlinear at infinity, there exists $R_2\equiv R_2(\lambda_2) > r_2^*$ such that
\begin{equation}
	\label{eq3.13}
	g_2(t,x_1,x_2) \geq\frac{1}{c_2 \lambda_2 m_2 T} x_2, 
	\quad \text{for all } x_1 \ge 0,\, x_2 \ge c_2 R_2, \text{ and } t \in [0,T].
\end{equation}
Let now $\lambda_2\in\bigl(0,\overline{\lambda}_2(\lambda_1)\bigr)$, being
\begin{equation}
	\label{eq3.14}
	\overline{\lambda}_2(\lambda_1):=r^*_2\left[M_2 \left(\|h_2\|_{L^1} f_2(0,c_2r^*_2)+T\max_{[0,T]\times[0,R_1(\lambda_1)]\times[c_2r^*_2, r^*_2]} g_2(t,x_1,x_2)\right)\right]^{-1}.
\end{equation}
For this $\lambda_2$ we choose $r_1\equiv r_1(\lambda_1,\lambda_2)\in(0,R_1)$ and $r_2\equiv r_2(\lambda_1, \lambda_2)\in(0,r^*_2)$ such that
\begin{equation}
	\label{eq3.15}
	\lambda_1 m_1 f_1(r_1,R_2)\left\|h_1\right\|_{L^1} \geq r_1,\quad	\lambda_2 m_2 f_2(R_1, r_2) \left\|h_2\right\|_{L^1} \geq r_2.
\end{equation}

Next, we show that the operator $\mathcal{T}$ satisfies the conditions of Theorem \ref{main_th} on $\overline{K}_{(r_1,r_2),(R_1,r_2^*)}$ and $\overline{K}_{(r_1,r_2^*),(R_1,R_2)}$. The restrictions of $\mathcal{T}$ to those sets are known to be compact and to map the corresponding sets into the cone $K$.

The same reasoning as in the proof of Theorem \ref{th_sol} shows that the first component of the restriction of $\mathcal{T}$ to $\overline{K}_{(r_1,r_2),(R_1,R_2)}$
 satisfies condition $(a)$ of Theorem \ref{main_th}. That is, for $x=(x_1,x_2)\in \overline{K}_{(r_1,r_2),(R_1,R_2)}$, we have
\[
\|\mathcal{T}_1 x\|_\infty \geq \|x_1\|_\infty, \, \text{if } \|x_1\|_\infty = r_1, 
\quad \text{and} \quad
\|\mathcal{T}_1 x\|_\infty \leq \|x_1\|_\infty, \, \text{if } \|x_1\|_\infty = R_1.
\]
Thus, condition $(a)$ is verified for $\mathcal{T}_1$ on both restrictions under consideration.

We now turn our attention to the second component. Take $x=(x_1,x_2)\in\overline{K}_{(r_1,r_2),(R_1,R_2)}$ with $\|x_2\|_\infty=R_2$. Then $c_1 r_1\leq x_1(t)\leq R_1$ and $c_2 R_2\leq x_2(t)\leq R_2$ for all $t\in[0,T]$. From \eqref{eq3.13} we obtain
\[
\mathcal{T}_2 x(t) 
\ge \lambda_2 \int_0^T G_2(t,s)\, g_2(x_1(s),x_2(s))\, ds
\ge \lambda_2 m_2 \int_0^T \frac{x_2(s)}{c_2 \lambda_2 m_2 T}\, ds =  \int_0^T \frac{x_2(s)}{c_2 T}\, ds
\geq R_2 = \|x_2\|_\infty,
\]
for all $ t \in [0,T] $. Thus $\|\mathcal{T}_2 x\|_\infty\geq\|x_2\|_\infty$ if $\|x_2\|_\infty=R_2$.

On the other hand, let $x=(x_1,x_2)\in \overline{K}_{(r_1,r_2),(R_1,R_2)}$ be such that $\|x_2\|_\infty = r_2^*$. Then $0 \leq x_1(t) \leq R_1$ and $c_2 r_2^* \leq x_2(t) \leq r_2^*$ for all $t \in [0,T]$. Since $\lambda_2 \in (0,\overline{\lambda}_2(\lambda_1))$, by $(H_1)$ we obtain
\begin{align*}
	\mathcal{T}_2 x(t) 
	&= \lambda_2 \int_0^T G_2(t,s)\bigl( h_2(s)\, f_2(x_1(s),x_2(s)) + g_2(s,x_1(s),x_2(s)) \bigr)\, ds \\
	&\leq \lambda_2 M_2  \left(\|h_2\|_{L^1} f_2(0,c_2 r_2^*) 
	+ T\max_{[0,T]\times[0,R_1(\lambda_1)]\times[c_2 r_2^*,\, r_2^*]} g_2(t,x_1,x_2)\right) \\
	&= \frac{\lambda_2}{\overline{\lambda}_2(\lambda_1)}\, r_2^* 
	< r_2^* = \|x_2\|_\infty,
\end{align*}
for all $t \in [0,T]$. Hence, $\|\mathcal{T}_2 x\|_\infty < \|x_2\|_\infty$ if  $\|x_2\|_\infty = r_2^*$.

Finally, note that if $x=(x_1,x_2)\in \overline{K}_{(r_1,r_2),(R_1,R_2)}$ is such that $\|x_2\|_\infty=r_2$, then $c_1 r_1 \leq x_1(t)\leq R_1$ and $c_2r_2\leq x_2(t)\leq r_2$ for all $t\in[0,T]$. Therefore, by using \eqref{eq3.15} we obtain
\[
\mathcal{T}_2 x(t) 
\ge \lambda_2 \int_0^T G_2(t,s)\, h_2(s)\, f_2(x_1(s),x_2(s))\, ds
\ge \lambda_2 m_2 f_2(R_1, r_2) \int_0^T h_2(s)\, ds
\geq r_2 = \|x_2\|_\infty,
\]
for all $ t \in [0,T] $, so that $\|\mathcal{T}_2 x\|_\infty \geq \|x_2\|_\infty$ if $\|x_2\|_\infty=r_2$. 

We have verified that conditions $(a)$ and $(b)$ hold for the second component of the restrictions $\mathcal{T}:\overline{K}_{(r_1,r_2),(R_1,r_2^*)} \to K$ and $\mathcal{T}:\overline{K}_{(r_1,r_2^*),(R_1,R_2)}\to K$, respectively. Therefore, Theorem \ref{main_th} ensures the existence of two fixed points $x^1=(x^1_1, x^1_2)$ and $x^2=(x^2_1, x^2_2)$ of $\mathcal{T}$ such that $r_1 \leq \|x^j_1\|_\infty \leq R_1, \, j=1,2,$ and $r_2 \leq \|x^1_2\|_\infty \leq r_2^* \leq \|x^2_2\|_\infty \leq R_2$. Observe that $\left\|x^1_2\right\|_{\infty}\neq r_2^*\neq \left\|x^2_2\right\|_{\infty}$, since $\left\|\mathcal{T}_2 x\right\|_{\infty}<\left\|x_2\right\|_{\infty}$ whenever $\left\|x_2\right\|_{\infty}=r_2^*$, which ensures that $x^1$ and $x^2$ are distinct fixed points. These correspond to the desired solutions of system \eqref{eq2.2}. \qed

Next, we introduce a new condition that will be essential for establishing the existence of four solutions and, when imposed in the context of Theorem~\ref{th_two_sol}, allows for an enhanced conclusion.

\begin{enumerate}
	\item[$(H_3)_i$] There exist $r^*>0$ and $L>0$ such that
	\begin{equation}\label{eq_bound_gi}
	\sup_{\begin{array}{c} t \in [0,T], x_i\in[c_i r^*,r^*], x_j\in[0,\infty) \\ (j\neq i)\end{array}}g_i(t,x_1,x_2)\leq L.
	\end{equation}
\end{enumerate}

\begin{corollary}
	\label{coro}
Let $(A_1)$, $(H_1)$, and $(H_2)$ hold. Assume that $g_1$ is sublinear and $g_2$ is superlinear at infinity satisfying $(H_3)_2$.

Then, there exists $\overline{\lambda}_2>0$ such that system \eqref{eq2.2} has, at least, two distinct positive periodic solutions for all $\lambda_1>0$ and  $0<\lambda_2<\overline{\lambda}_2$.
\end{corollary}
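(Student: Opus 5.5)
The plan is to repeat the proof of Theorem~\ref{th_two_sol} with a single change: the intermediate radius $r_2^*$ is no longer arbitrary but is taken to be the value $r^*$ given by $(H_3)_2$. This makes the threshold for $\lambda_2$ independent of $\lambda_1$. In Theorem~\ref{th_two_sol}, $\lambda_1$ entered $\overline{\lambda}_2(\lambda_1)$ only through the maximum of $g_2$ over $[0,T]\times[0,R_1(\lambda_1)]\times[c_2r_2^*,r_2^*]$. Under $(H_3)_2$ with $r_2^*:=r^*$, that maximum is bounded by $L$ uniformly in $x_1\in[0,\infty)$. We therefore set
\[
\overline{\lambda}_2:=r^*\Bigl[M_2\bigl(\|h_2\|_{L^1}f_2(0,c_2r^*)+TL\bigr)\Bigr]^{-1},
\]
which is positive and does not depend on $\lambda_1$.

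Now fix any $\lambda_1>0$ and any $\lambda_2\in(0,\overline{\lambda}_2)$, and choose the remaining radii in the same order as before.
\begin{enumerate}
\item Choose $R_1=R_1(\lambda_1)$ satisfying \eqref{eq3.11} and \eqref{eq3.12}, using that $f_1$ is decreasing and $g_1$ is sublinear.
\item Choose $R_2>r^*$ satisfying \eqref{eq3.13}, using superlinearity of $g_2$.
\item Choose $r_1\in(0,R_1)$ and $r_2\in(0,r^*)$ satisfying \eqref{eq3.15}. This is possible because $f_1(r_1,R_2)$ and $f_2(R_1,r_2)$ stay bounded below by positive constants as $r_1,r_2\to0^+$, thanks to monotonicity.
\end{enumerate}

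The estimates for $\mathcal{T}_1$ (condition $(a)$) and the expansive estimates for $\mathcal{T}_2$ at $\|x_2\|_\infty=r_2$ and at $\|x_2\|_\infty=R_2$ carry over word for word from the proofs of Theorems~\ref{th_sol} and \ref{th_two_sol}. None of them involves $\overline{\lambda}_2$.

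The only estimate that changes, and the only point needing care, is the one at $\|x_2\|_\infty=r^*$. There $c_2r^*\le x_2(s)\le r^*$ while $x_1(s)\ge0$ is otherwise unrestricted. Using $(H_1)$ to bound $f_2(x_1,x_2)\le f_2(0,c_2r^*)$, and $(H_3)_2$ to bound $g_2\le L$, we get
\[
\mathcal{T}_2x(t)\le \lambda_2M_2\bigl(\|h_2\|_{L^1}f_2(0,c_2r^*)+TL\bigr)=\frac{\lambda_2}{\overline{\lambda}_2}\,r^*<r^*.
\]
This holds on both sets $\overline{K}_{(r_1,r_2),(R_1,r^*)}$ and $\overline{K}_{(r_1,r^*),(R_1,R_2)}$. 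The bound is strict, so no fixed point has $\|x_2\|_\infty=r^*$.

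Applying Theorem~\ref{main_th} to the restriction of $\mathcal{T}$ to each of these two sets gives two fixed points. Their second components satisfy $\|x_2\|_\infty\in[r_2,r^*)$ and $\|x_2\|_\infty\in(r^*,R_2]$ respectively, so the two fixed points are distinct. Both are positive, since membership in the cone gives $\min_t x_i(t)\ge c_i\|x_i\|_\infty\ge c_ir_i>0$.
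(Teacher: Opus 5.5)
Your proposal is correct and follows the paper's own argument. You take $r_2^*$ to be the $r^*$ from $(H_3)_2$, so that the $\lambda_1$-dependent maximum of $g_2$ in \eqref{eq3.14} is replaced by the uniform bound $L$, define $\overline{\lambda}_2$ exactly as the paper does, and rerun the proof of Theorem~\ref{th_two_sol}, spelling out the estimates (notably the strict bound at $\|x_2\|_\infty=r^*$) that the paper leaves as ``analogous.''
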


\noindent
{\bf Proof.} Since $g_2$ satisfies condition $(H_3)_2$, in the proof of Theorem~\ref{th_two_sol} we choose $r^*_2>0$ as the value $r^*$ given by $(H_3)_2$ and so there exists $L> 0$ such that the dependence on $\lambda_1$ in the choice of $\overline{\lambda}_2$ in \eqref{eq3.14} can be removed. It then suffices to define
\begin{equation*}
	\overline{\lambda}_2 := r^*_2 \frac{1}{M_2 \bigl(\|h_2\|_{L^1} f_2(0,c_2 r^*_2) + T L\bigr)}.
\end{equation*}
The remaining part of the proof follows analogously.
\qed

\begin{remark}
	\label{re3.1}
	Under the assumptions of Theorem~\ref{th_two_sol} or Corollary \ref{coro}, system \eqref{eq2.2} has at least one positive solution provided that $\lambda_1>0$ and $\lambda_2=\overline{\lambda}_2$. Indeed, following the proof of Theorem~\ref{th_two_sol}, the unique difference is that now $\|\mathcal{T}_2 x\|_\infty \leq \|x_2\|_\infty$ if  $\|x_2\|_\infty = r_2^*$, and thus it cannot be ensured that $x^1$ and $x^2$ are distinct solutions. 
\end{remark}

\begin{remark}
	Clearly, the roles of $g_1$ and $g_2$ in Theorem~\ref{th_two_sol} and Corollary \ref{coro} are interchangeable, yielding a symmetric conclusion.
\end{remark}

\begin{remark}
	Observe that condition $(H_3)_i$ obviously holds if $g_i$ is uniformly bounded with respect to the variables $t$ and $x_j$ ($j\neq i$), that is, if there exists a continuous function $L:\mathbb{R}^+\to\mathbb{R}^+$ such that 
	\[g_i(t,x_1,x_2)\leq L(x_i) \quad \text{ for all } t\in[0,T] \text{ and } x_1,x_2\geq 0. \]
	In particular, notice that if $g_2$ is non-increasing with respect to $x_1$, then it satisfies condition $(H_3)_2$. Analogously, when $g_1$ is non-increasing with respect to $x_2$, it fulfills condition $(H_3)_1$.
	
	Note that, in these cases, for each $r^*>0$ there exists $L(r^*)>0$ such that \eqref{eq_bound_gi} holds. However, it is enough that it be satisfied for a specific $r^*>0$. It is a routine to check that for the function $g_1$ given by
	\[g_1(x_1,x_2)=\dfrac{1}{(x_1-2)^2+e^{-x_2} } \]
	condition $(H_3)_1$ holds with $r^*=1$ and $L=1$, whereas $g_1(2,\cdot)$ is unbounded.
\end{remark}

\subsection{Superlinear nonlinearities}

Finally, it is also possible to establish the existence of four distinct solutions under superlinearity for certain configurations of $(\lambda_1,\lambda_2)$. To this end, it is necessary to impose condition $(H_3)_i$ on both components, as shown in our final main theorem below.

\begin{theorem}
	\label{th_four_sol}
	Let $(A_1)$, $(H_1)$ and $(H_2)$ hold. Assume that $g_i$ is superlinear at infinity fulfilling $(H_3)_i$ for each $i=1,2$.
	
	Then, there exist $\overline{\lambda}_i>0$, $i=1,2$ such that system \eqref{eq2.2} has, at least, four distinct positive periodic solutions for all $0<\lambda_i<\overline{\lambda}_i$, $i=1,2$.
\end{theorem}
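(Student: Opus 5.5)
We apply Theorem~\ref{main_th} four times, to the product annuli obtained by splitting each component into an inner and an outer shell. The plan is to find numbers $0<r_i<r_i^*<R_i$, $i=1,2$, such that on each set $\overline{K}_{(\rho_1,\rho_2),(P_1,P_2)}$ with $[\rho_i,P_i]\in\{[r_i,r_i^*],[r_i^*,R_i]\}$ the hypotheses of Theorem~\ref{main_th} hold. On the inner shell $[r_i,r_i^*]$ the component $\mathcal{T}_i$ will satisfy (a), and on the outer shell $[r_i^*,R_i]$ it will satisfy (b). At the common radius $r_i^*$ we want the strict inequality $\|\mathcal{T}_i x\|_\infty<\|x_i\|_\infty$. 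That strictness shows no fixed point has $\|x_i\|_\infty=r_i^*$, so the four fixed points lie in four disjoint product regions and are distinct.

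\emph{Choice of $r_i^*$ and $\overline{\lambda}_i$.} Take $r_i^*:=r^*$ and the constant $L$ from $(H_3)_i$ (possibly different constants for $i=1,2$). Set
\[
\overline{\lambda}_i:=r_i^*\Bigl[M_i\bigl(\|h_i\|_{L^1}f_i(c_i r_i^*\delta_{i1},c_i r_i^*\delta_{i2})+TL_i\bigr)\Bigr]^{-1},
\]
where $\delta_{ij}$ is the Kronecker delta, i.e.\ $f_1(c_1r_1^*,0)$ and $f_2(0,c_2r_2^*)$. Suppose $\|x_i\|_\infty=r_i^*$ and $x_j\ge 0$ is arbitrary. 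By $(H_1)$ we have $f_i(x_1,x_2)\le f_i(c_ir_i^*\delta_{i1},c_ir_i^*\delta_{i2})$, and by $(H_3)_i$ we have $g_i\le L_i$ regardless of $x_j$. Exactly as in the proof of Theorem~\ref{th_two_sol}, this gives $\|\mathcal{T}_ix\|_\infty\le(\lambda_i/\overline{\lambda}_i)r_i^*<r_i^*$. The role of $(H_3)_i$ is precisely that this bound no longer depends on the (unknown) outer radius $R_j$ of the other component. This is what lets the two $\overline\lambda_i$ be fixed independently, avoiding the circular dependence that appeared in Theorem~\ref{th_two_sol} through $R_1(\lambda_1)$.

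\emph{Outer and inner radii.} Fix $\lambda_i\in(0,\overline{\lambda}_i)$. By superlinearity, choose $R_i>r_i^*$ with $g_i(t,x_1,x_2)\ge x_i/(c_i\lambda_i m_i T)$ for $x_i\ge c_iR_i$, uniformly in $t$ and $x_j$. Then, as in Theorem~\ref{th_two_sol}, $\|x_i\|_\infty=R_i$ implies $\|\mathcal{T}_ix\|_\infty\ge R_i$, whatever $x_j$ is. Finally choose $r_i\in(0,r_i^*)$ with
\[
\lambda_1m_1f_1(r_1,R_2)\|h_1\|_{L^1}\ge r_1,\qquad \lambda_2m_2f_2(R_1,r_2)\|h_2\|_{L^1}\ge r_2 .
\]
This is possible since the left sides are positive and nondecreasing as $r_i\downarrow0$ by $(H_1)$ and $(H_2)$. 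If $\|x_1\|_\infty=r_1$ and $\|x_2\|_\infty\le R_2$, then $x_1\le r_1$ and $x_2\le R_2$, so $\mathcal{T}_1x\ge\lambda_1m_1f_1(r_1,R_2)\|h_1\|_{L^1}\ge r_1$. The symmetric estimate holds for $\mathcal{T}_2$. Because $R_j$ is the largest radius for the other component, this lower bound is valid in all four annuli simultaneously.

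\emph{Main obstacle and conclusion.} The delicate point is ordering the choices so that no estimate needs a quantity chosen later. The upper bound at $r_i^*$ must hold for every $x_j$ in any of the four annuli, which $(H_3)_i$ provides. The lower bound at $r_i$ needs $R_j$, which is already fixed. The superlinear bound at $R_i$ is uniform in $x_j$ by definition. With these in place, Theorem~\ref{main_th} applied on the four sets $\overline{K}_{(\rho_1,\rho_2),(P_1,P_2)}$, with compression on inner shells and expansion on outer shells (the mixed cases allowed by Theorem~\ref{main_th}), yields fixed points $x^{k}$, $k=1,\dots,4$. Their norms satisfy $\|x^k_i\|_\infty\neq r_i^*$, so they lie in the pairwise disjoint regions $\{r_i\le\|x_i\|<r_i^*\}$ or $\{r_i^*<\|x_i\|\le R_i\}$. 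They are positive because $\min x_i\ge c_ir_i>0$ in $K$, and hence they are four distinct positive periodic solutions.
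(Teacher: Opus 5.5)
Your proposal is correct and follows essentially the same route as the paper's proof. It uses the same thresholds $\overline{\lambda}_i$ built from $(H_3)_i$, the same choice of $R_i(\lambda_i)>r_i^*$ by superlinearity, the same inner radii $r_i$ fixed after $R_j$ via $\lambda_1 m_1 f_1(r_1,R_2)\|h_1\|_{L^1}\ge r_1$ (and symmetrically), and the same four applications of Theorem~\ref{main_th}, where the strict inequality at $r_i^*$ makes the four fixed points distinct.
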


\noindent
{\bf Proof.} Since $g_i$ satisfies condition $(H_3)_i$, there exist $r^*_i>0$ and $L^i(r^*_i)>0$ such that
\[\sup_{\begin{array}{c} t \in [0,T], x_i\in[c_i r^*,r^*], x_j\in[0,\infty) \\ (j\neq i)\end{array}}g_i(t,x_1,x_2)\leq L^i(r^*_i), \quad i=1,2.\]
We will find $\tilde{r}_i, \tilde{R}_i\in\mathbb{R}$ fulfilling $0<\tilde{r}_i<r^*_i<\tilde{R}_i$, $i=1,2$ such that Theorem \ref{main_th} applies to the restriction of $\mathcal{T}$ to $\overline{K}_{(\tilde{r}_1,\tilde{r}_2),(r^*_1,r^*_2)}$, $\overline{K}_{(\tilde{r}_1,r^*_2),(r^*_1,\tilde{R}_2)}$, $\overline{K}_{(r^*_1,r^*_2), (\tilde{R}_1,\tilde{R}_2)}$ and $\overline{K}_{(r^*_1,\tilde{r}_2),(\tilde{R}_1,r^*_2)}$. 

Set now
\begin{equation}
	\label{eq3.18}
	\overline{\lambda}_1:=r^*_1 \frac{1}{M_1 \bigl(\|h_1\|_{L^1} f_1(c_1 r^*_1, 0) +T L^1(r^*_1)\bigr)},\quad \overline{\lambda}_2:=r^*_2 \frac{1}{M_2 \bigl(\|h_2\|_{L^1} f_2(0,c_2 r^*_2) + T L^2(r^*_2)\bigr)}
\end{equation}
and take $\lambda_i\in(0,\bar{\lambda}_i)$, $i=1,2$. 
Since $g_i$ is superlinear at infinity, there exists $R_i\equiv R_i(\lambda_i)>r^*_i$ such that
\begin{equation}
	\label{eq3.17}
	g_i(t,x_1,x_2) \geq \frac{1}{c_i \lambda_i m_i T} x_i, 
	\quad \text{for all } x_i \ge c_i R_i,\, x_j \ge 0, \text{ and } t \in [0,T], \quad i=1,2.
\end{equation}
Choose $r_1\equiv r_1(\lambda_1,\lambda_2)\in (0,r^*_1)$ and $r_2\equiv r_2(\lambda_1, \lambda_2)\in(0,r^*_2)$ fulfilling
\begin{equation}
	\label{eq3.19}
	\lambda_1 m_1 f_1(r_1,R_2) \|h_1\|_{L^1}\geq r_1, \quad 	\lambda_2 m_2 f_2(R_1,r_2)\|h_2\|_{L^1}\geq r_2.
\end{equation}

We check the conditions in Theorem \ref{main_th} for the first component of the operator. Let $x=(x_1,x_2)\in\overline{K}_{(r_1,r_1),(R_1,R_2)}$ be such that $\|x_1\|_\infty=R_1$. Then $c_1 R_1\leq x_1(t)\leq  R_1$ and $c_2 r_2\leq x_2(t)\leq  R_2$ for all $t\in[0,T]$. By using \eqref{eq3.17} we obtain
\begin{align*}
	\mathcal{T}_1 x(t) 
	\geq\lambda_1 \int_0^T G_1(t,s)g_1(s,x_1(s),x_2(s))\, ds \geq \lambda_1 m_1 \int_{0}^{T} \frac{x_1(s)}{c_1\lambda_1 m_1 T} dt = \int_{0}^{T} \frac{x_1(s)}{c_1 T} dt \geq R_1=\|x_1\|_\infty, 
\end{align*}
for all $t\in[0,T]$. Hence $\|\mathcal{T}_1 x\|_\infty \geq\|x_1\|_\infty$ if $\|x_1\|_\infty=R_1$.

On the other hand, if $x = (x_1,x_2) \in \overline{K}_{(r_1,r_2),(R_1,R_2)}$ is such that $\|x_1\|_\infty = r_1^*$, then $c_1 r_1^* \leq x_1(t) \leq r_1^*$ and $0 \leq x_2(t) \leq R_2$ for all $t \in [0,T]$. Since $0<\lambda_1<\overline{\lambda}_1$ the following inequalities hold
\begin{align*}
	\mathcal{T}_1 x(t) 
	&= \lambda_1 \int_0^T G_1(t,s)\bigl( h_1(s)\, f_1(x_1(s),x_2(s)) + g_1(s,x_1(s),x_2(s)) \bigr)\, ds \\
	&\leq \lambda_1 M_1 \left(\|h_1\|_{L^1} f_1(c_1 r_1^*,0) 
	+T \max_{[0,T]\times[c_1 r_1^*,\, r_1^*]\times[0,R_2(\lambda_2)]} g_1(t,x_1,x_2)\right) \\
	&\leq \lambda_1 M_1 \left(\|h_1\|_{L^1} f_1(c_1 r_1^*,0) 
	+T L^1(r^*_1)\right) = \frac{\lambda_1}{\overline{\lambda}_1}\, r_1^* 
	< r_1^* = \|x_1\|_\infty,
\end{align*}
for all $t \in [0,T]$. Therefore, $\|\mathcal{T}_1 x\|_\infty < \|x_1\|_\infty$ if $\|x_1\|_\infty = r_1^*$. 

Finally, let $x = (x_1,x_2) \in \overline{K}_{(r_1,r_2),(R_1,R_2)}$ be such that $\|x_1\|_\infty = r_1$. Then $c_1 r_1\leq x_1(t) \leq r_1$ and $c_2 r_2\leq x_2(t) \leq R_2$ for all $t \in [0,T]$. Thus, by \eqref{eq3.19} and $(H_1)$ we obtain
\[
\mathcal{T}_1 x(t) 
\ge \lambda_1 \int_0^T G_1(t,s)\, h_1(s)\, f_1(x_1(s),x_2(s))\, ds
\ge \lambda_1 m_1 f_1(r_1,R_2) \int_0^T h_1(s)\, ds
\geq r_1 = \|x_1\|_\infty,
\]
for all $ t \in [0,T] $, so that $\|\mathcal{T}_1 x\|_\infty \geq \|x_1\|_\infty$ if $\|x_1\|_\infty = r_1$. 

Symmetric arguments yield the same behavior for the second component. Namely,
\[
\|\mathcal{T}_2 x\|_\infty \geq \|x_2\|_\infty \quad \text{if } \|x_2\|_\infty = r_2 \text{ or } \|x_2\|_\infty = R_2, \text{ and } \|\mathcal{T}_2 x\|_\infty < \|x_2\|_\infty \quad \text{if } \|x_2\|_\infty = r_2^*.
\]

In this way, each restriction of $\mathcal{T}$ is under the assumptions of Theorem~\ref{main_th}$:$ on $\overline{K}_{(r_1,r_2),(r_1^*,r_2^*)}$, condition $(a)$ holds for both components; on $\overline{K}_{(r_1^*,r_2),(R_1, r_2^*)}$, condition $(a)$ holds for the second component and $(b)$ for the first; on $\overline{K}_{(r_1,r_2^*),(r_1^*,R_2)}$, condition $(a)$ holds for the first component and $(b)$ for the second; finally, on $\overline{K}_{(r_1^*,r_2^*),(R_1, R_2)}$, condition $(b)$ holds for both components. Hence, there exists four distinct fixed points $x^1=(x_1^1,x_2^1)$, $x^2=(x_1^2,x_2^2)$, $x^3=(x_1^3,x_2^3)$ and $x^4=(x_1^4,x_2^4)$ of $\mathcal{T}$ such that 
\[r_1 \leq \|x_1^j\|<r^*_1<\|x_1^k\| \leq R_1 \quad \text{ and }\quad r_2 \leq \|x_2^l\|<r^*_2<\|x_{2}^s\|\leq  R_2 , \]
with $ j\in\{1,2\}$, $k\in\{3,4\}$, $l\in\{1,4\}$ and $s\in\{2,3\}$. This completes the proof, as the desired solutions have been obtained as fixed points of $\mathcal{T}$. \qed

\begin{remark}
Arguing as in Remark~\ref{re3.1}, and under the assumptions of Theorem~\ref{th_four_sol}, if $\lambda_1 \in (0,\overline{\lambda}_1)$ and $\lambda_2 = \overline{\lambda}_2$, system \eqref{eq2.2} has at least two distinct solutions. The same conclusion holds when $\lambda_1 = \overline{\lambda}_1$ and $\lambda_2 \in (0,\overline{\lambda}_2)$. Furthermore, if $\lambda_1 = \overline{\lambda}_1$ and $\lambda_2 = \overline{\lambda}_2$, one can ensure the existence of at least one solution.
\end{remark}


\section{Examples}

The example below illustrates the application of the main results.

\begin{example}
We establish sufficient conditions for the existence of positive $T$-periodic solutions to systems of the form
\begin{equation}
	\label{eq3.20}
	\left\{
	\begin{aligned}
		& x_1'' + a_1(t)x_1 = \lambda_1\sqrt{(x_1^2+x_2^2)^{-\alpha}}  + \lambda_1 \bigl(\psi_1(x_2) + x_1^{\beta_1}\bigr), \quad t \in [0,T], \\
		& x_2'' + a_2(t)x_2 = \lambda_2 \sqrt{(x_1^2+x_2^2)^{-\alpha}}  + \lambda_2 \bigl(\psi_2(x_1) + x_2^{\beta_2}\bigr), \quad t \in [0,T],
	\end{aligned}
	\right.
\end{equation}
under assumption $(A_1)$, where $\alpha>0$ and $\psi_i:\mathbb{R}^+\to \mathbb{R}^+$, $i=1,2$, are continuous and bounded functions. The number of such solutions will depend on the parameters $\beta_1$ and $\beta_2$.

Observe that system \eqref{eq3.20} fits into the form of \eqref{eq2.2} with
\[
h_i(t)=1, \quad f_i(x_1,x_2)=\sqrt{(x_1^2+x_2^2)^{-\alpha}} , \quad \text{and} \quad 
g_i(t,x_1,x_2)=\psi_i(x_j)+x_i^{\beta_i}, \quad (j\neq i), \quad i=1,2.
\]

It is straightforward to verify that conditions $(H_1)$ and $(H_2)$ hold. Furthermore, since $\psi_i$ is bounded, then
\[
(g_i)_\infty=
	\left\{
\begin{aligned}
	&0, \quad && \text{if } 0<\beta_i<1,\\
&\infty, \quad && \text{if } \beta_i>1,
\end{aligned}
\right.
\qquad i=1,2.
\]
Moreover, each $g_i$ satisfies condition $(H_3)_i$.

As a consequence of Theorems \ref{th_sol} and \ref{th_four_sol}, together with Corollary \ref{coro}, the following statements hold:

\begin{enumerate}
	\item[(a)] If $\beta_i\in(0,1)$ for $i=1,2$, then system \eqref{eq3.20} admits at least one positive $T$-periodic solution for all $\lambda_i>0$, $i=1,2$.
	
	\item[(b)] If $\beta_1\in(0,1)$ and $\beta_2>1$, then there exists $\overline{\lambda}_2>0$ such that system \eqref{eq3.20} admits at least two distinct positive $T$-periodic solutions  for all $\lambda_1>0$ and $0<\lambda_2<\overline{\lambda}_2$.
	
	\item[(c)] If $\beta_i>1$ for $i=1,2$, then there exist $\overline{\lambda}_i>0$, $i=1,2$, such that system \eqref{eq3.20} admits at least four distinct positive $T$-periodic solutions  for all $0<\lambda_i<\overline{\lambda}_i$, $i=1,2$.
\end{enumerate}

Let $L_i>0$ denote the upper bound of $\psi_i$. Then, $0\leq \psi_i(x)\leq L_i$ for all $x\in\mathbb{R}^+$, $i=1,2$. By fixing $r_1^*=r_2^*=1$, the constants in $(b)$ and $(c)$ can be chosen as
\[
\overline{\lambda}_i=\bigl(M_i T \left(c_i^{-\alpha}+L_i+1\right)\bigr)^{-1}, \quad i=1,2.
\]

In this case, the solutions $x^1=(x_1^1,x_2^1)$ and $x^2=(x_1^2,x_2^2)$ obtained in situation $(b)$ satisfy $0 < \|x_1^j\|_\infty, \, j=1,2$ and $0 < \|x_2^1\|_\infty < 1 < \|x_2^2\|_\infty$.
Similarly, the solutions $x^1, x^2, x^3, x^4$ obtained in situation $(c)$ satisfy $0<\|x_1^j\|_\infty<1<\|x_1^k\|_\infty,$ $0<\|x_2^l\|_\infty<1<\|x_2^s\|_\infty,$ with $j\in\{1,2\}$, $k\in\{3,4\}$, $l\in\{1,4\}$, and $s\in\{2,3\}$.
%
\end{example}

%

The requirements of conditions $(H_3)_1$ and $(H_3)_2$ in Theorem \ref{th_four_sol} may be weakened.
Indeed, following its proof, in order to still guarantee the existence of four distinct solutions, it suffices to choose 
	$(\lambda_1,\lambda_2)\in(0,\overline{\lambda}_1)\times(0,\overline{\lambda}_2)$, where 
	$(\overline{\lambda}_1, \overline{\lambda}_2)\in \mathbb{R}^2_+:=(0,\infty)^2$ is a solution of the system of nonlinear inequalities
	\begin{equation}
		\label{eq3.21}
		\begin{aligned}
			\overline{\lambda}_1 &\leq r^*_1\Biggl[M_1 \Bigl(\|h_1\|_{L^1} f_1(c_1r^*_1,0) + 
			T \max_{[0,T]\times[c_1r^*_1, r^*_1]\times[0,R_2(\overline{\lambda}_2)]} g_1(t,x_1,x_2)\Bigr)\Biggr]^{-1},\\
			\overline{\lambda}_2 &\leq r^*_2\Biggl[M_2 \Bigl(\|h_2\|_{L^1} f_2(0,c_2r^*_2) + 
			T \max_{[0,T]\times[0,R_1(\overline{\lambda}_1)]\times[c_2r^*_2, r^*_2]} g_2(t,x_1,x_2)\Bigr)\Biggr]^{-1},
		\end{aligned}
	\end{equation}
	for fixed $r^*_1,r^*_2>0$.
There exist particular examples of functions $g_i$ for which \eqref{eq3.21} admits a solution 
$(\overline{\lambda}_1, \overline{\lambda}_2)\in\mathbb{R}^2_+$, without fulfilling $(H_3)_1$ and $(H_3)_2$.  A concrete example is given below.

\begin{example}
	A pair $(\overline{\lambda}_1,\overline{\lambda}_2)\in\mathbb{R}^2_+$ can be chosen such that, for all $\lambda_i\in(0,\overline{\lambda}_i)$, $i=1,2$, there exists four distinct $T$-periodic solutions with strictly positive components for the system
	\begin{equation}
		\label{eq3.22}
		\left\{
		\begin{aligned}
			& x_1'' + a_1(t)x_1 = \lambda_1\sqrt{(x_1^2+x_2^2)^{-\alpha_1}}  + \lambda_1 \bigl( x_1^{\beta^1_1}+x_2^{\beta^1_2} \bigr), \quad t \in [0,T], \\
			& x_2'' + a_2(t)x_2 = \lambda_2 \sqrt{(x_1^2+x_2^2)^{-\alpha_2}}  + \lambda_2 \bigl(x_1^{\beta^2_1}+x_2^{\beta^2_2} \bigr), \quad t \in [0,T],
		\end{aligned}
		\right.
	\end{equation}
 where $\alpha_1,\alpha_2>0$, $\beta^1_1, \, \beta^2_2>1$, $\beta^1_2<\beta^2_2-1$, $\beta^2_1<\beta^1_1-1$ and under assumption $(A_1)$.

Observe that system \eqref{eq3.22} may also be written in the form of \eqref{eq2.2}, with
\[
h_i(t)=1, \quad 
f_i(x_1,x_2)=\sqrt{(x_1^2+x_2^2)^{-\alpha_i}}, \quad \text{and} \quad 
g_i(t,x_1,x_2)=x_i^{\beta^i_i}+x_j^{\beta^i_j}, \quad (j \neq i), \quad i=1,2.
\]
Since $\beta^1_1, \, \beta^2_2>1$, it follows that 
\[
\lim_{x_1\to \infty}\,\min_{x_2\in[0,\infty)}\frac{g_1(t,x_1,x_2)}{x_1}
=\lim_{x_1\to \infty} x_1^{\beta_1^1-1}=+\infty \,\, \text{ and } \,\, \lim_{x_2\to \infty}\,\min_{x_1\in[0,\infty)}\frac{g_2(t,x_1,x_2)}{x_2}
=\lim_{x_2\to \infty} x_2^{\beta_2^2-1}=+\infty.
\]
Hence, each $g_i$ is superlinear for $i=1,2$. Moreover, for $\lambda_1,\lambda_2>0$, the quantities $R_1(\lambda_1)$ and $R_2(\lambda_2)$ in the proof of Theorem \ref{th_four_sol} are determined by
\[
c_1^{\beta^1_1}R_1(\lambda_1)^{\beta^1_1}=\frac{1}{\lambda_1 m_1 T}R_1(\lambda_1),
\quad 
c_2^{\beta^2_2}R_2(\lambda_2)^{\beta^2_2}=\frac{1}{\lambda_2 m_2 T}R_2(\lambda_2),
\]
since these correspond to the minimal values satisfying \eqref{eq3.17}. Consequently,
\begin{equation}
	\label{eq3.23}
R_1(\lambda_1)=\left(\frac{1}{\lambda_1 m_1 T\,c_1^{\beta^1_1}}\right)^{\!\frac{1}{\beta^1_1-1}}, \quad R_2(\lambda_2)=\left(\frac{1}{\lambda_2 m_2 T\,c_2^{\beta^2_2}}\right)^{\!\frac{1}{\beta^2_2-1}}.
\end{equation}

It is immediate that $R_1(\lambda_1)$ and $R_2(\lambda_2)$ are decreasing with respect to $\lambda_1$ and $\lambda_2$, respectively, and
\[
\lim_{\lambda_i\to 0^+} R_i(\lambda_i)=+\infty, \quad i=1,2.
\]
Therefore, for any fixed $r_i^*>0$, one can choose $\lambda_i>0$ sufficiently small so that $R_i(\lambda_i)>r_i^*$, $i=1,2$.

Using \eqref{eq3.23}, we obtain
\[
\max_{[0,T]\times[c_1r^*_1,r^*_1]\times[0,R_2(\lambda_2)]} g_1(t,x_1,x_2)=(r^*_1)^{\beta^1_1}+\left(\frac{1}{\lambda_2 m_2 T\,c_2^{\beta^2_2}}\right)^{\!\frac{\beta^1_2}{\beta^2_2-1}}=\frac{(r^*_1)^{\beta^1_1}\bigl(\lambda_2 m_2 T c_2^{\beta^2_2}\bigr)^{\frac{\beta^1_2}{\beta^2_2-1}}+1}{\bigl(\lambda_2 m_2 T c_2^{\beta^2_2}\bigr)^{\frac{\beta^1_2}{\beta^2_2-1}}},
\]
\[
\max_{[0,T]\times[0,R_1(\lambda_1)]\times[c_2r^*_2, r^*_2]} g_2(t,x_1,x_2)=(r^*_2)^{\beta^2_2}+\left(\frac{1}{\lambda_1 m_1 T\,c_1^{\beta^1_1}}\right)^{\!\frac{\beta^2_1}{\beta^1_1-1}}=\frac{(r^*_2)^{\beta^2_2}\bigl(\lambda_1 m_1 T c_1^{\beta^1_1}\bigr)^{\frac{\beta^2_1}{\beta^1_1-1}}+1}{\bigl(\lambda_1 m_1 T c_1^{\beta^1_1}\bigr)^{\frac{\beta^2_1}{\beta^1_1-1}}}.
\]

Thus, the nonlinear system of inequalities \eqref{eq3.21} reduces in this setting to
\begin{equation}
	\label{eq3.24}
\overline{\lambda}_1\leq \frac{r^*_1}{M_1 T} \frac{\bigl(\overline{\lambda}_2 m_2 T c_2^{\beta_2^2}\bigr)^{\frac{\beta^1_2}{\beta^2_2-1}}}{\bigl[(c_1 r^*_1)^{-\alpha}+(r^*_1)^{\beta^1_1}\bigr]\bigl(\overline{\lambda}_2 m_2 T c_2^{\beta_2^2}\bigr)^{\frac{\beta^1_2}{\beta^2_2-1}}+1}, \,\, \overline{\lambda}_2\leq \frac{r^*_2}{M_2 T} \frac{\bigl(\overline{\lambda}_1 m_1 T c_1^{\beta_1^1}\bigr)^{\frac{\beta^2_1}{\beta^1_1-1}}}{\bigl[(c_2 r^*_2)^{-\alpha}+(r^*_2)^{\beta^2_2}\bigr]\bigl(\overline{\lambda}_1 m_1 T c_1^{\beta_1^1}\bigr)^{\frac{\beta^2_1}{\beta^1_1-1}}+1}.
\end{equation}

A pair $(\overline{\lambda}_1,\overline{\lambda}_2)\in\mathbb{R}^2_+$ satisfying \eqref{eq3.24} exists due to the assumptions $\beta^1_2<\beta^2_2-1$ and $\beta^2_1<\beta^1_1-1$. Indeed, \eqref{eq3.24} can be written in the form
\begin{equation}
	\label{eq3.25}
	x\leq\frac{a_1y^{\delta_1}}{b_1y^{\delta_1}+1},
	\quad
	y\leq\frac{a_2x^{\delta_2}}{b_2x^{\delta_2}+1},
\end{equation}
with $a_i,b_i>0$ and $\delta_i\in(0,1)$, $i=1,2$.

System \eqref{eq3.25} admits infinitely many solutions $(\bar{x}, \bar{y})\in\mathbb{R}^2_+$. Indeed
\[\lim_{y\to 0^+}\big(b_1y^{\delta_1}+1\big)=1,
\quad
\lim_{x\to 0^+}\big(b_2x^{\delta_2}+1\big)=1,\]
and hence there exists $(\varepsilon_1,\varepsilon_2)\in \mathbb{R}^2_+$ such that
\[b_1y^{\delta_1}+1<\tfrac{3}{2}, \, \text{ for } y\in(0,\varepsilon_1) \quad \text{ and } \quad b_2x^{\delta_2}+1<\tfrac{3}{2}, \, \text{ for } x\in(0,\varepsilon_2).\]
Therefore, any $(x,y)\in(0,\varepsilon_1)\times(0,\varepsilon_2)$ satisfying
\begin{equation}
	\label{eq3.26}
	x\leq \tfrac{2}{3} a_1 y^{\delta_1}, 
	\quad 
	y\leq \tfrac{2}{3} a_2 x^{\delta_2}
\end{equation}
is also a solution of \eqref{eq3.25}.

Substituting the second inequality into the first, we obtain
\[
x \leq \tfrac{2}{3}a_1\left(\tfrac{2}{3}a_2x^{\delta_2}\right)^{\delta_1}
= \tfrac{2}{3}\left(\tfrac{2}{3}\right)^{\delta_1} a_1 a_2^{\delta_1} x^{\delta_1\delta_2},
\]
which is equivalent to
\begin{equation}
	\label{eq3.27}
	x^{1-\delta_1\delta_2} \leq \tfrac{2}{3}\left(\tfrac{2}{3}\right)^{\delta_1} a_1 a_2^{\delta_1}.
\end{equation}
Since $1-\delta_1\delta_2>0$, the left-hand side tends to zero as $x\to 0^+$, and thus one can choose $x_0>0$ satisfying \eqref{eq3.27}. Setting, for instance, $y_0:=\tfrac{1}{3}a_2 x_0^{\delta_2}$, it follows that $(x_0,y_0)$ is a solution of \eqref{eq3.26}, and hence of \eqref{eq3.25}.

Therefore, letting $(\overline{\lambda}_1,\overline{\lambda}_2)\in\mathbb{R}^2_+$ be a solution of \eqref{eq3.24} and choosing $\lambda_i\in(0,\overline{\lambda}_i)$, $i=1,2$, sufficiently small so that $R_i(\lambda_i)>r_i^*$, with fixed $r_i^*>0$, it follows from the proof of Theorem \ref{th_four_sol} the existence of four $T$-periodic solutions with positive components $x^1, x^2, x^3, x^4$ satisfying $0<\|x_1^j\|_\infty<r^*_1<\|x_1^k\|_\infty,$ $0<\|x_2^l\|_\infty<r^*_2<\|x_2^s\|_\infty,$ with $j\in\{1,2\}$, $k\in\{3,4\}$, $l\in\{1,4\}$, and $s\in\{2,3\}$.
\end{example}

\section*{Acknowledgements}
L. M. Fern\'andez-Pardo acknowledges the financial support of the Spanish Ministry of Science, Innovation and Universities (Grant reference FPU24/01545).

J. Rodr\'iguez--L\'opez has been partially supported by Ministerio de Ciencia y Tecnología (Spain), AEI and Feder, grant PID2020-113275GB-I00, and Xunta de Galicia, Spain, Project ED431C 2023/12.

\end{document}